\documentclass[12pt]{article}
\usepackage{graphicx}
\usepackage{amsmath}
\usepackage{amsfonts}
\usepackage{amsthm}
\usepackage[T1]{fontenc}
\usepackage{url}
\usepackage[margin=1in]{geometry}
\usepackage{amssymb,bm}
\usepackage{amsmath}
\usepackage{amsthm}
\usepackage{graphicx}
\usepackage[active]{srcltx} 
\usepackage{hyperref}
\hypersetup{pdfborder=0 0 0}

\setlength{\oddsidemargin}{0in}
\setlength{\evensidemargin}{-0.0625in}
\setlength{\textwidth}{6.5in}
\setlength{\topmargin}{-.5in}
\setlength{\textheight}{8.8in}


\newtheorem{theorem}{{\sc Theorem}}[section]

\newtheorem{lemma}[theorem]{{\sc Lemma}}




\def\XXint#1#2#3{{\setbox0=\hbox{$#1{#2#3}{\int}$ }
\vcenter{\hbox{$#2#3$ }}\kern-.6\wd0}}




\bmdefine\BGa{\alpha}
\bmdefine\BGb{\beta}
\bmdefine\BGd{\delta}
\bmdefine\BGe{\epsilon}
\bmdefine\BGve{\varepsilon}
\bmdefine\BGf{\phi}
\bmdefine\BGvf{\varphi}
\bmdefine\BGg{\gamma}
\bmdefine\BGc{\chi}
\bmdefine\BGi{\iota}
\bmdefine\BGk{\kappa}
\bmdefine\BGl{\lambda}
\bmdefine\BGn{\eta}
\bmdefine\BGm{\mu}
\bmdefine\BGv{\nu}
\bmdefine\BGp{\pi}
\bmdefine\BGth{\theta}
\bmdefine\BGvth{\vartheta}
\bmdefine\BGr{\rho}
\bmdefine\BGvr{\varrho}
\bmdefine\BGs{\sigma}
\bmdefine\BGvs{\varsigma}
\bmdefine\BGt{\tau}
\bmdefine\BGj{\tau}
\bmdefine\BGu{\upsilon}
\bmdefine\BGo{\omega}
\bmdefine\BGx{\xi}
\bmdefine\BGy{\psi}
\bmdefine\BGz{\zeta}
\bmdefine\BGD{\Delta}
\bmdefine\BGF{\Phi}
\bmdefine\BGG{\Gamma}
\bmdefine\BGL{\Lambda}
\bmdefine\BGP{\Pi}
\bmdefine\BGT{\Theta}
\bmdefine\BGS{\Sigma}
\bmdefine\BGU{\Upsilon}
\bmdefine\BGO{\Omega}
\bmdefine\BGX{\Xi}
\bmdefine\BGY{\Psi}



\bmdefine\BCA{{\mathcal A}}
\bmdefine\BCB{{\mathcal B}}
\bmdefine\BCC{{\mathcal C}}
\bmdefine\BCD{{\mathcal D}}
\bmdefine\BCE{{\mathcal E}}
\bmdefine\BCF{{\mathcal F}}
\bmdefine\BCG{{\mathcal G}}
\bmdefine\BCH{{\mathcal H}}
\bmdefine\BCI{{\mathcal I}}
\bmdefine\BCJ{{\mathcal J}}
\bmdefine\BCK{{\mathcal K}}
\bmdefine\BCL{{\mathcal L}}
\bmdefine\BCM{{\mathcal M}}
\bmdefine\BCN{{\mathcal N}}
\bmdefine\BCO{{\mathcal O}}
\bmdefine\BCP{{\mathcal P}}
\bmdefine\BCQ{{\mathcal Q}}
\bmdefine\BCR{{\mathcal R}}
\bmdefine\BCS{{\mathcal S}}
\bmdefine\BCT{{\mathcal T}}
\bmdefine\BCU{{\mathcal U}}
\bmdefine\BCV{{\mathcal V}}
\bmdefine\BCW{{\mathcal W}}
\bmdefine\BCX{{\mathcal X}}
\bmdefine\BCY{{\mathcal Y}}
\bmdefine\BCZ{{\mathcal Z}}

\bmdefine\Bzr{ 0}
\bmdefine\Ba{ a}
\bmdefine\Bb{ b}
\bmdefine\Bc{ c}
\bmdefine\Bd{ d}
\bmdefine\Be{ e}
\bmdefine\Bf{ f}
\bmdefine\Bg{ g}
\bmdefine\Bh{ h}
\bmdefine\Bi{ i}
\bmdefine\Bj{ j}
\bmdefine\Bk{ k}
\bmdefine\Bl{ l}
\bmdefine\Bm{ m}
\bmdefine\Bn{ n}
\bmdefine\Bo{ o}
\bmdefine\Bp{ p}
\bmdefine\Bq{ q}
\bmdefine\Br{ r}
\bmdefine\Bs{ s}
\bmdefine\Bt{ t}
\bmdefine\Bu{ u}
\bmdefine\Bv{ v}
\bmdefine\Bw{ w}
\bmdefine\Bx{ x}
\bmdefine\By{ y}
\bmdefine\Bz{ z}
\bmdefine\BA{ A}
\bmdefine\BB{ B}
\bmdefine\BC{ C}
\bmdefine\BD{ D}
\bmdefine\BE{ E}
\bmdefine\BF{ F}
\bmdefine\BG{ G}
\bmdefine\BH{ H}
\bmdefine\BI{ I}
\bmdefine\BJ{ J}
\bmdefine\BK{ K}
\bmdefine\BL{ L}
\bmdefine\BM{ M}
\bmdefine\BN{ N}
\bmdefine\BO{ O}
\bmdefine\BP{ P}
\bmdefine\BQ{ Q}
\bmdefine\BR{ R}
\bmdefine\BS{ S}
\bmdefine\BT{ T}
\bmdefine\BU{ U}
\bmdefine\BV{ V}
\bmdefine\BW{ W}
\bmdefine\BX{ X}
\bmdefine\BY{ Y}
\bmdefine\BZ{ Z}



\usepackage{tikz}
\numberwithin{equation}{section}
\begin{document}
\title{Weighted asymptotic Korn and interpolation Korn inequalities with singular weights}
\author{Davit Harutyunyan\footnote{University of California Santa Barbara, harutyunyan@ucsb.edu} and
Hayk Mikayelyan\footnote{University of Nottingham Ningbo China, Hayk.Mikayelyan@nottingham.edu.cn} }
\maketitle

\begin{abstract}
In this work we derive asymptotically sharp weighted Korn and Korn-like interpolation (or first and a half) inequalities in thin domains with singular weights. The constants $K$ (Korn's constant) in the inequalities depend on the domain thickness $h$ according to a power rule $K=Ch^\alpha,$ where $C>0$ and $\alpha\in R$ are constants independent of $h$ and the displacement field. The sharpness of the estimates is understood in the sense that the asymptotics $h^\alpha$ is optimal as $h\to 0.$ The choice of the weights is motivated by several factors, in particular a spacial case occurs when making Cartesian to polar change of variables in two dimensions.
\end{abstract}

\hspace{-0.65cm}\textbf{Keywords}\ \ Korn inequality; Weighted Korn inequality; thin domains
\newline
\linebreak
\textbf{Mathematics Subject Classification}\ \ 00A69, 35J65, 74B05, 74B20, 74K25

\section{Introduction}
\label{sec:1}
Since the pioneering work of Korn [\ref{bib:Korn.1},\ref{bib:Korn.2}], Korn and Korn-like inequalities, such as geometric rigidity estimates [\ref{bib:Fri.Jam.Mue.1},\ref{bib:Fri.Jam.Mue.2}] as well as extensions [\ref{bib:Nef.Pau.Wit.1}] have been known to play a central role in the theories of linear [\ref{bib:Friedrichs},\ref{bib:Pay.Wei.},\ref{bib:Gra.Tru.},\ref{bib:Gra.Har.2},\ref{bib:Gra.Har.3}] and nonlinear [\ref{bib:Fri.Jam.Mue.1},\ref{bib:Fri.Jam.Mue.2}] elasticity. Korn's first inequality has been introduced by Korn [\ref{bib:Korn.1},\ref{bib:Korn.2}] to prove the coercivity of the linear elastic energy, and it asserts the following: \textit{Given $\Omega\subset\mathbb R^n$ and a closed subspace of vector fields $V\subset H^1(\Omega,\mathbb R^n),$ that has a trivial intersection with the subspace $\mathrm{skew}(\mathbb R^n)=\{Ax+b\ : \ A\in \mathbb M^{n\times n}, A^T=-A, b\in \mathbb R^n\},$ of rigid body motions, i.e., $V\cap \mathrm{skew}(\mathbb R^n)=\{0\},$ there exists a constant $C,$ depending only on $\Omega$ and $V,$ such that for any vector field $\Bu\in V$ the inequality holds:
\begin{equation}
\label{1.1}
C\|\nabla \Bu\|_{L^2(\Omega)}^2\leq \|e(\Bu)\|_{L^2(\Omega)}^2.
\end{equation}
Here, $e(\Bu)=\frac{1}{2}\left(\nabla \Bu+\nabla \Bu^T\right)$ is the symmetric part of the gradient, i.e., strain in linear elasticity.}
Korn's second inequality reads as follows:
\textit{Given $\Omega\subset\mathbb R^n,$ there exists a constant $C,$ depending only on $\Omega,$ such that for any vector field
$\Bu\in H^1(\Omega,\mathbb R^n)$ the inequality holds:}
\begin{equation}
\label{1.1}
C\|\nabla \Bu\|_{L^2(\Omega)}^2\leq \|e(\Bu)\|_{L^2(\Omega)}^2+\|\Bu\|_{L^2(\Omega)}^2.
\end{equation}
It has been known that in thin structure, such as rod plate and shell theories, the dependence of the constant $C$ in Korn's inequalities on the geometric parameters of the domain $\Omega$ becomes crucial, e.g., [\ref{bib:Fri.Jam.Mue.1},\ref{bib:Fri.Jam.Mue.2},\ref{bib:Gra.Tru.},\ref{bib:Gra.Har.2},\ref{bib:Gra.Har.3}].
Especially it is important to know how the optimal constant $C$ scales with the thickness $h$ of the thin structure as $h$ goes to zero.
For plates the constant $C$ has been proved to scale like $h^2$ by Friesecke, James and M\"uller [\ref{bib:Fri.Jam.Mue.1}] even in the geometric rigidity estimate, which is the nonlinear analog of Korn's first inequality. When the shell has a nonzero principal curvature, then the scaling $h^2$ is no longer optimal, and new exponents $\alpha$ satisfying $1\leq \alpha\leq 1.5$ occur as shown in [\ref{bib:Gra.Har.1},\ref{bib:Gra.Har.4},\ref{bib:Harutyunyan3}]. We will call such inequalities sharp. The recent survey book chapter by Stefan M\"uller [\ref{bib:Mueller.}] gives a complete picture on the above issues and applications as well as the open problems in the field. 
Sharp Kotn's inequalities for thin structures, such as rods, plates, shells and combinations of those, have been recently studied by several authors and groups. We refer to the works as well as the above mentioned ones and the references therein for more detailed information.  [\ref{bib:Aco.Cej.Dur.},\ref{bib:Aco.Dur.},\ref{bib:Aco.Dur.Lom.},\ref{bib:Aco.Dur.LopGar.},\ref{bib:Naz.1},\ref{bib:Naz.2},\ref{bib:Naz.3},\ref{bib:Naz.Slu.}].
In the present work we deal with weighted Korn and Korn-like inequalities, on which there is relatively less information in the literature 
[\ref{bib:Aco.Cej.Dur.},\ref{bib:Aco.Dur.Lom.},\ref{bib:Aco.Dur.LopGar.},\ref{bib:Harutyunyan2},\ref{bib:LopGar1.}]. The recent work of Lopez Garcia [\ref{bib:LopGar2.}] goes further and establishes a generalization of Korn inequalities in the case when the domain $\Omega$ is not necessarily thin and thus one is not interested in sharp estimates. We refer to the book of Acosta and Duran [\ref{bib:Aco.Dur.}] for a more detailed discussion of the subject and possible applications. Another motivation of ours of considering weighted Korn and Korn-like inequalities is the following: When dealing with radially symmetric structures, it is convenient to make a Cartesian to polar change of variables, where a weight $w^2=r$ occurs in the norms, which vanishes at the origin and thus becomes singular. The Korn and similar inequalities under consideration become weighted ones with the above weight, which do not follow from the non-weighted analogues due to the singularity of the weight. The case of two spatial dimensions and $w^2=r$ is partially studied in [\ref{bib:Harutyunyan2}], and applied to prove optimal Korn inequalities for washers.
Another aspect is that the classical Kotn's first inequality requires a least one condition on the displacement, such as a boundary or a normalization condition, whereas the analogous geometric rigidity estimate does not. Therefore, in order to avoid the imposed boundary conditions, one may be able to apply a localization argument in some parts of the domain, by considering the analogous weighted version of the inequality under consideration. Of course the last is a delicate question and is task for out future studies.

\section{Main results}
\label{sec:2}
We assume in the sequel that $n\in\mathbb N,$ $n\geq 2,$ $\omega\subset\mathbb R^{n-1}$ and $\Omega\subset\mathbb R^n$ are open bounded connected Lipschitz domains. Let the constant matrix $A=\{a_{ij}\}_{i,j=1}^n\in \mathbb M^{n\times n}$ be positive definite with eigenvalues between the positive constants $0<\lambda\leq \Lambda,$ i.e.,
\begin{equation}
\label{2.1}
\lambda |\xi|^2\leq \xi A\xi^T\leq \Lambda |\xi|^2\quad\text{ for all}\quad \xi\in\mathbb R^n.
\end{equation}
Set next the elliptic operator
\begin{equation}
\label{2.2}
L(u)=\mathrm{div}(A\nabla u),\quad \text{for all}\quad u\in H^1(\Omega).
\end{equation}

The following gradient separation estimate for solutions of elliptic equations is one of the main results of the paper. It has been shown that this kind of estimates derive Korn's first and Korn interpolation inequalities with the same weight in two space dimensions [\ref{bib:Gra.Har.1},\ref{bib:Harutyunyan1},\ref{bib:Harutyunyan2}].
\begin{theorem}
\label{thm:2.1}
Let $n\in\mathbb N,$ $n\geq 2,$ let $\omega\subset\mathbb R^{n-1}$ be an open bounded connected Lipschitz domain with the following properties:
there exists a partition of the boundary $\partial\omega=\Gamma_1\cup\Gamma_2$ and a number
$0\leq d\leq n-1,$ such that
\begin{itemize}
\item[(i)] $\Gamma_1$ is a $d-$dimensional simplex.
\item[(ii)] $\omega$ is star shaped with respect to $\Gamma_1,$ i.e., for any points $x\in\Gamma_1$ and $y\in\omega,$ the ray $l_x(y)$ starting
    from $x$ and going through $y$ meets the boundary of $\omega$ second time at $z\in\Gamma_2,$ such that the segment $(x,z)$ is the only common part of $l_x(y)$ and $\omega.$
\end{itemize}
Assume further $h>0,$ and denote $\Omega=(0,h)\times\omega.$ Let the matrix $A\in \mathbb M^{n\times n}$ and the operator
$L(u)=\mathrm{div}(A\nabla u)$ be as in (\ref{2.1}) and (\ref{2.2}). Assume $k\in \mathbb N$ and the exponents $\alpha_i$ and the coefficients $c_i$ satisfy the conditions $\alpha_i\in [0,1/2)$ and $\,c_i\geq 0$ for $i=1,2,\dots,k$.
Denote $\delta(x)=\mathrm{dist}(x,(0,h)\times \Gamma_1)$ and $w(x)=c_1\delta^{\alpha_1}(x)+c_2\delta^{\alpha_2}(x)+\dots+c_k\delta^{\alpha_k}(x)$ for $x\in\Omega.$ If the function $u\in C^2(\Omega)\cap C(\bar\Omega)$ solves the equation $L(u)=0$ in $\Omega$ and satisfies the boundary conditions $u=0$ on $(0,h)\times\partial\omega,$ and the exponent $\beta\in [0,1/2)$ satisfies the bound 
\begin{equation}
\label{2.2.1}
\lambda>\frac{4n\Lambda\beta}{(1-2\beta)^2},
\end{equation} 
then there exists a computable constant $C,$ depending only on the quantities $\lambda, \Lambda, n, k$ and $\beta,$ such that
\begin{equation}
\label{2.3}
\|w\nabla u\|_{L^2(\Omega)}^2\leq C\left(\frac{\|wu\|_{L^2(\Omega)}\cdot\|wu_{x_1}\|_{L^2(\Omega)}}{h}+\|wu_{x_1}\|_{L^2(\Omega)}^2\right),
\end{equation}
whenever $\alpha_i\in [0,\beta],$ for $i=1,2,\dots,k$.
Moreover, if the operator $L$ is the Laplacian and $n=2,$ i.e., $L=\Delta$ and $\omega=(a,b)$ for some $a<b,$ then the estimate (\ref{2.3}) holds true for all values of $\beta\in[0,1/2],$ which means that imposing the condition (\ref{2.2.1}) is not necessary.
\end{theorem}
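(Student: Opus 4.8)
The plan is to establish the gradient-separation estimate \eqref{2.3} via a weighted Rellich--Ne\v{c}as--Pohozaev type identity adapted to the thin direction $x_1$, integrated against a multiplier that combines the vector field $e_1$ generating translations in the thin direction with a radial field in the cross-section $\omega$ that accounts for the star-shapedness with respect to $\Gamma_1$. Concretely, since $\omega$ is star-shaped with respect to the simplex $\Gamma_1$, we may choose for each cross-sectional point a vector field $m(x') $ (essentially $x'$ minus its projection onto the affine hull of $\Gamma_1$, suitably truncated) such that $m$ points outward along $\Gamma_2$, vanishes appropriately near $\Gamma_1$, and satisfies $m\cdot\nabla\delta \ge c\,\delta$ and $|\nabla m|\le C$. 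The key computation is to multiply the equation $L(u)=\mathrm{div}(A\nabla u)=0$ by $w^2\,(m\cdot\nabla u)$ and by $w^2\, x_1 u_{x_1}$ (or a combination), integrate over $\Omega$, and integrate by parts. The boundary condition $u=0$ on $(0,h)\times\partial\omega$ kills the lateral boundary terms except for a sign-definite term on $\Gamma_2$ (which we discard using ellipticity and the outward-pointing property of $m$), leaving on the faces $\{x_1=0,h\}$ only terms controlled by $\|w u_{x_1}\|_{L^2}$ and $\|wu\|_{L^2}$.

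The heart of the argument is bookkeeping the commutator terms produced when derivatives fall on the weight $w^2$. Here the hypothesis $\alpha_i\in[0,\beta]$ with $\beta<1/2$ enters: writing $w=\sum_i c_i\delta^{\alpha_i}$ with $c_i\ge 0$, each differentiation of $w^2$ produces a factor behaving like $\alpha_i\,\delta^{-1}$ times $w^2$, and the offending term takes the schematic form $\int_\Omega \alpha\, w^2\, \delta^{-1}\,(m\cdot\nabla\delta)\,|A^{1/2}\nabla u|\,|\nabla u|$. Using $m\cdot\nabla\delta\le C\delta$ this is bounded by $C\alpha\int_\Omega w^2|\nabla u|^2$, and the factor $n\Lambda$ records the dimension count (there are $n$ directions, each contributing, and the upper ellipticity bound $\Lambda$) while the $(1-2\beta)^{-2}$ comes from the Cauchy--Schwarz/Young split needed to absorb cross terms of the type $w^2\delta^{-1}(m\cdot\nabla u)\,u$ against $\lambda\int w^2|\nabla u|^2$ after a Hardy-type inequality in the cross-section (legitimate precisely because $2\alpha_i<1$, so $\delta^{2\alpha_i-2}$ is integrable against test functions vanishing on $\Gamma_1$ in the relevant trace sense — or rather the boundary terms it generates on $\Gamma_1$ vanish). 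Collecting everything, the bad term is $\dfrac{4n\Lambda\beta}{(1-2\beta)^2}\int_\Omega w^2|\nabla u|^2$, and the smallness condition \eqref{2.2.1} guarantees $\lambda - \dfrac{4n\Lambda\beta}{(1-2\beta)^2}>0$, so this term is absorbed into the left-hand side; the remaining right-hand side, after one more Cauchy--Schwarz on the face integrals, is exactly $C\bigl(h^{-1}\|wu\|_{L^2}\|wu_{x_1}\|_{L^2}+\|wu_{x_1}\|_{L^2}^2\bigr)$, the factor $h^{-1}$ arising from the multiplier $x_1$ being of size $h$ on $\Omega$ paired against a face of co-size $1$.

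For the final sharpened claim ($L=\Delta$, $n=2$, $\omega=(a,b)$), the point is that in one cross-sectional dimension the field $m$ can be taken to be exactly affine, $m(x')=x'-a$ (taking $\Gamma_1=\{a\}$), so that $m\cdot\nabla\delta=\delta$ exactly with no error, $\nabla m\equiv 1$, and the Laplacian means $A=I$ so $\lambda=\Lambda=1$ and there is no dimension-counting slack: the commutator term can be handled by an exact integration by parts (a genuine Hardy identity on $(a,b)$ with the endpoint $a$) rather than an inequality, and one finds the coefficient of $\int w^2|\nabla u|^2$ that must be absorbed is controlled for all $\alpha_i\le\beta\le 1/2$, with the borderline $\beta=1/2$ surviving because the critical Hardy constant is attained but the competing boundary term on $\{x'=a\}$ vanishes identically for functions in $C(\bar\Omega)$ that are forced to $0$ there. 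I expect the main obstacle to be making the cross-sectional Hardy/absorption step fully rigorous for the rough weight $\delta$ near the lower-dimensional simplex $\Gamma_1$ when $0<d<n-1$ — one must verify that the traces of $w^2\,u\,(m\cdot\nabla u)$ on $\Gamma_1$ genuinely vanish (or are non-negative with the right sign) despite $u$ not being assumed zero there, which is where the star-shapedness geometry and the structure of $m$ near $\Gamma_1$ must be used carefully, presumably via an approximation argument truncating away a $\sigma$-neighborhood of $\Gamma_1$ and letting $\sigma\to 0$ using $\alpha_i<1/2$ to control the error.
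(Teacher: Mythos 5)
There is a genuine gap, and it sits exactly at the crux of the theorem: the faces $\{x_1=0\}$ and $\{x_1=h\}$, where no boundary condition on $u$ is available. Your Rellich--Pohozaev multipliers $w^2(m\cdot\nabla u)$ and $w^2x_1u_{x_1}$ generate on these faces surface integrals of the form $\int_\omega w^2\,(A\nabla u\cdot e_1)\,(m\cdot\nabla u)\,dx'$ (and similarly with $x_1u_{x_1}$), which (i) contain traces of the tangential derivatives $u_{x_j}$, $j\ge2$, that are majorized by nothing on the right-hand side of (\ref{2.3}), and (ii) are $(n-1)$-dimensional integrals that cannot simply be Cauchy--Schwarzed into the volume norms $\|wu\|_{L^2(\Omega)}$ and $\|wu_{x_1}\|_{L^2(\Omega)}$; the assertion that they are ``controlled by'' these norms, with $h^{-1}$ ``arising from the multiplier $x_1$ being of size $h$'', is precisely what has to be proved and is where the interpolation structure of (\ref{2.3}) comes from. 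The paper's proof is built around this difficulty: it tests the equation with $\delta^\gamma u$ (not a Pohozaev field), so that after integrating by parts only in the cross-sectional variables the sole face contribution is $a_{11}\int_\omega u\,\delta^\gamma u_{x_1}$; it runs the identity on the family of interior slabs $\Omega_t=(h/2-t,h/2+t)\times\omega$ and integrates in $t\in(0,h/2)$, which converts the face terms into the volume quantity $\|wu\|\cdot\|wu_{x_1}\|$ and, by monotonicity of $t\mapsto\int_{\Omega_t}\delta^\gamma|\nabla u|^2$, yields the factor $1/h$ on the middle slab $\Omega_{h/4}$; and it then treats the outer slabs $(0,h/4)\times\omega$ by a completely separate mechanism, namely the Kondratiev--Oleinik one-dimensional inequality in the $x_1$-direction (Lemma~\ref{lem:4.1}) combined with Lemma~\ref{lem:3.1} applied to $u_{x_1}$, which again solves $L(u_{x_1})=0$ because $A$ is constant, to bound $\int_\Omega\delta^{\gamma+2}|\nabla u_{x_1}|^2$ by $\int_\Omega\delta^\gamma u_{x_1}^2$. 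Nothing in your sketch plays the role of these steps, and without them the estimate cannot be closed near $x_1=0,h$.

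Two further points. The obstacle you single out at the end is not actually present: since $\partial\omega=\Gamma_1\cup\Gamma_2$ and $u=0$ on $(0,h)\times\partial\omega$, the function $u$ does vanish on $(0,h)\times\Gamma_1$, and this is exactly what legitimizes the cross-sectional Hardy inequality along rays emanating from $\Gamma_1$ (the paper's Step 3, with constant $4/(1-\gamma)^2$, $\gamma=2\alpha$), which is the rigorous counterpart of your absorption step; what does require work, and what your $m$-field heuristics do not supply, is the pointwise bound $\sum_{i,j}a_{ij}\partial^2_{ij}(\delta^\gamma)\le n\Lambda\gamma\delta^{\gamma-2}$ on each region where $\delta$ is the distance to a fixed subface of the simplex (the paper's Step 2), which produces the precise constant in (\ref{2.2.1}). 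Finally, the borderline case $n=2$, $L=\Delta$, $\beta=1/2$ is not a critical-Hardy phenomenon in the paper: there $a_{12}=0$ kills the offending cross term, and the remaining commutator term equals $\frac{\gamma(\gamma-1)}{2}\int u^2x_2^{\gamma-2}\le0$ after an exact integration by parts, so no absorption, and hence no condition (\ref{2.2.1}), is needed at all.
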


Next theorem is the analogous Korn's interpolation inequality in two space dimensions.
\begin{theorem}
\label{thm:2.2}
For $a,h>0$ set $R=(0,h)\times(0,a).$ Let $k\in\mathbb N$ and let the exponents $\alpha_i\in \mathbb R$ and the function $w$ be as in Theorem~\ref{thm:2.1}. Assume the displacement $\BU=(u,v)\in H^1(R,\mathbb R^2)$ satisfies the boundary condition
$u(x,0)=u(x,a)$ for all $x\in(0,h)$ in the sense of traces. Then the weighted Korn interpolation inequality holds:
\begin{equation}
\label{2.4}
\|w\nabla \BU\|_{L^2(R)}^2\leq C\left(\frac{\|wu\|_{L^2(\Omega)}\cdot\|we(\BU)\|_{L^2(\Omega)}}{h}+\|we(\BU)\|_{L^2(\Omega)}^2\right).
\end{equation}
\end{theorem}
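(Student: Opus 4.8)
The plan is to reduce the weighted Korn interpolation inequality for the displacement $\BU=(u,v)$ to the scalar gradient separation estimate of Theorem~\ref{thm:2.1} applied to the harmonic part of $\BU$, following the standard strategy from [\ref{bib:Gra.Har.1},\ref{bib:Harutyunyan1},\ref{bib:Harutyunyan2}] but carrying the singular weight $w$ through every step. In two dimensions with $\Omega=R=(0,h)\times(0,a)$, the set $\Gamma_1$ in Theorem~\ref{thm:2.1} should be taken to be a point (the $d=0$ case, $\omega=(a',b')$ an interval), so that $\delta(x)=\mathrm{dist}(x,(0,h)\times\{pt\})$ and the special Laplacian case $L=\Delta$, $n=2$ of Theorem~\ref{thm:2.1} applies for the full range $\beta\in[0,1/2]$ with no smallness condition. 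Concretely, decompose $\BU$ using a harmonic lift: solve $\Delta\phi=0$ in $R$ with $\phi$ matching the appropriate trace of $u$ (respecting the periodicity-type boundary condition $u(x,0)=u(x,a)$ so that $\phi$ inherits it), and write $u=\phi+u_0$ where $u_0$ vanishes on the relevant parts of $\partial R$. The harmonic component is controlled by Theorem~\ref{thm:2.1}; the remainder is controlled by a weighted Poincaré-type inequality in the thin direction.

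The key steps, in order, are: (1) establish a weighted Poincaré inequality $\|w f\|_{L^2(R)}\le Ch\|w f_{x_1}\|_{L^2(R)}$ for functions $f$ vanishing on $\{x_1=0\}$ (or with suitable zero-average), with constant independent of $h$ — this is routine since integration is in the long variable $x_2$ while differentiation is in $x_1$, and $w$ depends only on the distance to a line parallel to the $x_1$-axis, hence $w$ is constant along $x_1$-fibers, making the one-dimensional Poincaré argument in $x_1$ weight-transparent; (2) express the in-plane strain components: $e_{11}(\BU)=u_{x_1}$, $e_{22}(\BU)=v_{x_2}$, $e_{12}(\BU)=\frac12(u_{x_2}+v_{x_1})$, and observe that $\Delta u = 2\partial_{x_1}e_{11}+2\partial_{x_2}e_{12}-\partial_{x_1}(v_{x_2}-... )$, i.e. each second derivative of $u$ and of $v$ is a combination of first derivatives of the strain, so the harmonic defect $\Delta u$ is an explicit first-order expression in $e(\BU)$; (3) apply Theorem~\ref{thm:2.1} to the harmonic part to get $\|w\nabla\phi\|_{L^2}^2\le C(\|w\phi\|\,\|w\phi_{x_1}\|/h+\|w\phi_{x_1}\|^2)$, then control $\phi_{x_1}$ and $\phi$ in terms of $u_{x_1}=e_{11}$ and $u$ using (1) and elliptic estimates; (4) recover $\nabla v$: since $v_{x_1}=2e_{12}-u_{x_2}$ and $v_{x_2}=e_{22}$, bounding $\|w\nabla v\|$ reduces to bounding $\|w u_{x_2}\|$, which is part of $\|w\nabla u\|$ already estimated, plus the strain terms; (5) absorb and collect, writing $\|wu_{x_1}\|\le\|we(\BU)\|$ and $\|w\phi\|\le\|wu\|+\|wu_0\|\le\|wu\|+Ch\|we(\BU)\|$ via step (1), then use Young's inequality on the cross term to reach the stated form~(\ref{2.4}).

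The main obstacle I expect is step (3)–(4): making the passage from $u$ to the full displacement $\BU$ quantitatively sharp in $h$ while the weight $w$ is singular. The delicate point is that Theorem~\ref{thm:2.1} only controls the \emph{harmonic} scalar $\phi$, whereas $u$ itself is not harmonic; one must show that the inhomogeneous part $u_0$ (with $\Delta u_0$ a first-order expression in $e(\BU)$) satisfies $\|w\nabla u_0\|_{L^2(R)}\le C\|we(\BU)\|_{L^2(R)}$ \emph{uniformly in} $h$, and this requires a weighted elliptic estimate on the thin rectangle that does not degenerate as $h\to0$ and that tolerates the vanishing of $w$ at the line $(0,h)\times\Gamma_1$. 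Handling this likely needs a direct energy argument testing $\Delta u_0=F$ (with $F$ first-order in strain) against $w^2 u_0$ and carefully controlling the commutator term $\nabla(w^2)\cdot u_0\nabla u_0$; the condition $\alpha_i<1/2$ is exactly what keeps the weight $w$ in the Muckenhoupt-type range where $\|\nabla(w^2)/w\|$-type quantities are integrable against the thin-domain geometry, mirroring the role of~(\ref{2.2.1}) in Theorem~\ref{thm:2.1}. Once this uniform weighted elliptic bound for $u_0$ is in hand, the remaining assembly is bookkeeping with Young's inequality.
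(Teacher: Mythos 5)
Your proposal takes essentially the same route as the paper's proof: decompose $u$ into its harmonic part $\tilde u$ and the defect $u-\tilde u$, apply Theorem~\ref{thm:2.1} to the harmonic part, control the defect by testing $\Delta(u-\tilde u)$ (a first-order expression in $e(\BU)$) against the weighted defect itself, use the weighted Poincar\'e inequality in the thin direction, and recover $\nabla v$ from $e_{12},e_{22}$ and $u_y$. The ``main obstacle'' you flag is resolved in the paper exactly by the energy argument you suggest: after integration by parts the commutator term carries the factor $\gamma(\gamma-1)\le 0$ and is therefore harmless, while the remaining lower-order term is absorbed through the weighted Hardy inequality (\ref{4.12}), yielding the uniform-in-$h$ bound $\|y^\alpha\nabla(u-\tilde u)\|\le \frac{4}{1-\gamma}\|y^\alpha e(\BU)\|$, so no Muckenhoupt-type machinery is needed.
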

Finally, we emphasize the sharpness of the estimates (\ref{2.3}) and (\ref{2.4}) in terms of the asymptotics of $h$ as $h\to 0.$

\begin{theorem}
\label{thm:2.3}
For the case $L=\Delta,$ the estimate (\ref{2.3}) is sharp in terms of the asymptotics of $h$ as $h\to 0.$ Also, the estimate (\ref{2.4})
is sharp as well in terms of the asymptotics of $h$ as $h\to 0.$
\end{theorem}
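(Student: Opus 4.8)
The plan is to exhibit, for each small $h>0$, explicit test functions that saturate the inequalities up to a constant independent of $h$, thereby showing that the exponent of $h$ appearing in the first term on the right of (2.3) and (2.4) cannot be improved. For (2.3) with $L=\Delta$, I would first reduce to the cleanest model case allowed by the theorem, namely $n=2$, $\omega=(a,b)$ with $a>0$ so that $\Gamma_1=\{a\}$, $\delta(x)=x_2-a$, and $w(x)=\delta^\beta(x)$ (a single term, $k=1$, $c_1=1$, $\alpha_1=\beta$). On $\Omega=(0,h)\times(a,b)$ I would seek a harmonic $u$ vanishing on $(0,h)\times\{a,b\}$ of separated form $u(x_1,x_2)=f(x_1)\,\varphi(x_2)$, where $\varphi$ is (proportional to) the first Dirichlet eigenfunction of $-\partial_{x_2}^2$ on $(a,b)$, i.e. $\varphi(x_2)=\sin\!\big(\pi(x_2-a)/(b-a)\big)$ with eigenvalue $\mu=(\pi/(b-a))^2$, so that harmonicity forces $f''=\mu f$; take $f(x_1)=\cosh(\sqrt{\mu}\,x_1)$ or, after rescaling, $f(x_1)=\cosh(\sqrt{\mu}\,x_1)$ restricted to $x_1\in(0,h)$. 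The point of this choice is that $u_{x_1}=\sqrt{\mu}\sinh(\sqrt\mu x_1)\varphi(x_2)$ is of order $h$ on $(0,h)$ while $u$ itself is of order $1$; thus $\|wu_{x_1}\|_{L^2}^2\sim h^3$, $\|wu\|_{L^2}^2\sim h$, $\|wu_{x_1}\|_{L^2}\|wu\|_{L^2}/h\sim h\cdot h^{1/2}\cdot h^{1/2}/h = h$ — wait, I must be careful — and $\|w\nabla u\|_{L^2}^2\sim h$ as well (dominated by the $u_{x_2}$ contribution). A quick bookkeeping of the powers of $h$ then shows that the right side of (2.3) is forced to be of the same order as the left side precisely because of the division by $h$ in the first term; dropping that term (or replacing $h$ by $h^{1+\epsilon}$) would make the right side $o$ of the left side, contradicting the inequality. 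The weight $\delta^\beta$ enters only through the $x_2$-integrals $\int_a^b (x_2-a)^{2\beta}\varphi^2\,dx_2$ and $\int_a^b(x_2-a)^{2\beta}\varphi'^2\,dx_2$, which are finite positive constants independent of $h$ since $2\beta<1$; so the weight changes none of the $h$-asymptotics.

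For the Korn interpolation inequality (2.4) I would build a displacement $\BU=(u,v)$ on $R=(0,h)\times(0,a)$ mimicking a low-energy bending-type field: the classical sharp example for unweighted Korn on a thin rectangle is $\BU(x_1,x_2)=(-x_1 x_2,\,\tfrac12 x_1^2)$ or a truncation/variant thereof, which is an infinitesimal rotation to leading order, so $e(\BU)$ is small while $\nabla\BU$ is not. Concretely I would take something like $u=-x_1 x_2$, $v=\tfrac12 x_1^2 + g(x_2)$ with $g$ chosen (if needed) to accommodate the periodicity-type boundary condition $u(x_1,0)=u(x_1,a)$; note $u(x_1,0)=0$ and $u(x_1,a)=-a x_1$, so this particular $u$ does not satisfy the hypothesis, and I would instead need $u$ periodic in $x_2$, e.g. replace $x_2$ by a periodic profile $p(x_2)$ with $p(0)=p(a)$, say $u=-x_1\sin(2\pi x_2/a)$ paired with a suitable $v$ making $e_{12}(\BU)$ and $e_{22}(\BU)$ small. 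The computation then parallels the scalar case: $\|w\nabla\BU\|_{L^2}^2\sim h$, $\|wu\|_{L^2}^2\sim h^3$, $\|we(\BU)\|_{L^2}^2\sim h^3$, and the mixed term $\|wu\|\,\|we(\BU)\|/h\sim h^{3/2}\cdot h^{3/2}/h=h^2$ — here again I must recompute the exact powers carefully, but the structural phenomenon is that $\|wu\|$ picks up an extra power of $h$ relative to $\|w\nabla\BU\|$ because $u$ vanishes on a face, and this is exactly what the $1/h$ factor compensates. The weight $w=\sum c_i\delta^{\alpha_i}$ with $\delta=\mathrm{dist}(\cdot,(0,h)\times\Gamma_1)$ again contributes only $h$-independent constants since all $\alpha_i<1/2$.

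The main obstacle, and the step requiring the most care, is constructing test fields that simultaneously (a) satisfy the imposed boundary condition exactly ($u=0$ on $(0,h)\times\partial\omega$ for (2.3); $u(x_1,0)=u(x_1,a)$ for (2.4)), (b) realize the worst-case $h$-scaling in which $\|wu\|$ is genuinely larger by the right power of $h$ than $\|w\nabla u\|$ or $\|we(\BU)\|$, and (c) keep all weighted integrals finite and $h$-uniformly bounded away from $0$ and $\infty$ near $\Gamma_1$. The eigenfunction/separation-of-variables construction handles (a) and (c) automatically in the scalar case; for the Korn case the difficulty is that the near-rigid-motion ansatz must be adjusted so that $e(\BU)$ is small \emph{and} the periodicity condition holds, which may require adding a correction term whose energy contribution must be shown to be lower order. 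Once the candidates are fixed, everything else is a direct asymptotic evaluation of four $L^2$ norms as $h\to0$, and the sharpness follows by comparing exponents: any inequality of the form (2.3) or (2.4) with the first right-hand term replaced by $h^{-1+\epsilon}\|wu\|\|wu_{x_1}\|$ for $\epsilon>0$ would fail along this sequence, which is precisely the meaning of sharpness asserted in the statement.
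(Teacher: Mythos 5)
Your treatment of (\ref{2.3}) is correct and is essentially the paper's own argument: the paper's Ansatz is $u=\cosh\bigl(\tfrac{\pi}{b-a}(x_1-\tfrac h2)\bigr)\sin\bigl(\tfrac{\pi x_2}{b-a}\bigr)$, and your separated cosh--sine harmonic function gives exactly the bookkeeping you describe ($\|wu\|^2\sim h$, $\|wu_{x_1}\|^2\sim h^3$, $\|w\nabla u\|^2\sim h$, so the term $\tfrac1h\|wu\|\,\|wu_{x_1}\|\sim h$ matches the left side while $\|wu_{x_1}\|^2=O(h^3)$). One small slip: replacing the denominator $h$ by $h^{1+\epsilon}$ makes the right side \emph{larger}, which proves nothing; the correct perturbation is the one in your last sentence, replacing $h^{-1}$ by $h^{-1+\epsilon}$.

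The genuine gap is in the Korn part (\ref{2.4}). Your candidate $u=-x_1\sin(2\pi x_2/a)$ with $v$ chosen to kill $e_{12}$ has $e_{11}(\BU)=u_{x_1}=-\sin(2\pi x_2/a)$ of order one, so $\|we(\BU)\|^2\sim h\sim\|w\nabla\BU\|^2$: the second right-hand term alone already balances the left side, and this field says nothing about the necessity of the $1/h$ factor. Moreover, the scalings you tabulate ($\|w\nabla\BU\|^2\sim h$, $\|we(\BU)\|^2\sim h^3$, right side $\sim h^2$) cannot hold for any admissible field, since they would contradict Theorem~\ref{thm:2.2} itself; the problem is not arithmetic but structural, and no lower-order correction repairs it, because any $u$ proportional to $x_1$ with an $h$-independent profile forces $e_{11}\sim 1$. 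The paper's construction, taken from [\ref{bib:Gra.Har.1},\ref{bib:Harutyunyan1}], makes $e_{11}$ vanish identically: take $u=f(y/h^{\alpha})$ with $f$ smooth, compactly supported in $(0,\infty)$ (so $u(x,0)=u(x,a)=0$ for small $h$, and the support lies at distance $\sim h^{\alpha}$ from $\Gamma_1$, so the weight contributes only a common factor to every norm), and $v=-x\,h^{-\alpha}f'(y/h^{\alpha})$ (the paper prints $f$ in the second slot, but $f'$ is what makes $e_{12}=0$). Then $e_{11}=e_{12}=0$, $e_{22}=-x\,h^{-2\alpha}f''(y/h^{\alpha})=O(h^{1-2\alpha})$, $|\nabla\BU|\sim h^{-\alpha}$ and $u\sim1$ on a support of measure $\sim h^{1+\alpha}$, which gives, up to the common weight factor, $\|w\nabla\BU\|^2\sim h^{1-\alpha}$, $\tfrac1h\|wu\|\,\|we(\BU)\|\sim h^{1-\alpha}$ and $\|we(\BU)\|^2\sim h^{3-3\alpha}$; since $3-3\alpha>1-\alpha$ for $\alpha\in[0,1/2]$, the interpolation term genuinely dominates and replacing $h^{-1}$ by $h^{-1+\epsilon}$ breaks the inequality. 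Note also that your guiding heuristic (``$\|wu\|$ picks up an extra power of $h$ because $u$ vanishes on a face'') points the wrong way here: in the correct Ansatz $u$ is of order one, and it is $e(\BU)$, not $u$, that is small relative to $\nabla\BU$.
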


\section{Preliminaries}
\label{sec:3}

\begin{lemma}
\label{lem:3.1}
Assume $\Omega\in\mathbb R^n$ is an open bounded set with Lipschitz boundary and assume $\partial\Omega=\Gamma_1\cup\Gamma_2.$
Denote by $\delta$ the distance function from the $\Gamma_1$ part of the boundary $\partial\Omega,$ i.e., $\delta(x)=\rm{dist}(x,\Gamma_1)$ for $x\in\mathbb R^n.$ Assume the weight function $w\in L^2(\Omega)\cap H_{loc}^1(\Omega)$ is such that
\begin{equation}
\label{3.2}
w(x)\geq 0,\quad|\delta(x)\nabla w(x)|\leq K\cdot w(x)\quad\text{for all}\quad x\in\Omega,
\end{equation}
for some $K>0.$ If the function $u\in H^1(\Omega)$ satisfies the boundary condition $u(x)=0$ for $x\in \Gamma_2$ in the sense of traces, then there exists a constant $C,$ depending only on the quantities $\lambda, \Lambda, K$ and $n$, such that the estimate holds:
\begin{equation}
\label{3.3}
\|w\delta\nabla u\|_{L^2(\Omega)}^2\leq C\left(\|wu\|_{L^2(\Omega)}^2+\|w\delta^2L(u)\|_{L^2(\Omega)}^2\right).
\end{equation}

\end{lemma}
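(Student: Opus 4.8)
The plan is to run an integration-by-parts (Rellich–Nečas–Pohozaev–type) identity against the equation $L(u) = \operatorname{div}(A\nabla u)$, using the weight $w^2\delta^2$ as a multiplier factor so that the desired left-hand side $\|w\delta\nabla u\|_{L^2}^2$ appears with a good sign. Concretely, I would start from the integral $\int_\Omega w^2\delta^2\,\nabla u\cdot A\nabla u\,dx$, which is comparable (via the ellipticity bounds $\lambda|\xi|^2\le \xi A\xi^T\le\Lambda|\xi|^2$ in (\ref{2.1})) to $\|w\delta\nabla u\|_{L^2(\Omega)}^2$. The idea is to integrate by parts once, moving one derivative off of $u$: write $w^2\delta^2\,\nabla u\cdot A\nabla u = \nabla u\cdot A\nabla(w^2\delta^2 u) - u\,\nabla u\cdot A\nabla(w^2\delta^2)$, so that the first term becomes, after integrating by parts, $-\int_\Omega w^2\delta^2 u\,L(u)\,dx$ plus a boundary term on $\partial\Omega$. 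The boundary term over $\Gamma_2$ vanishes because $u=0$ there (in the trace sense), and the boundary term over $\Gamma_1$ vanishes because $\delta=\operatorname{dist}(\cdot,\Gamma_1)$ vanishes on $\Gamma_1$ and enters squared; a density/approximation argument with smooth $u$ and a cutoff away from $\Gamma_1$ handles the low regularity. This yields
\begin{equation}
\label{3.4pf}
\int_\Omega w^2\delta^2\,\nabla u\cdot A\nabla u\,dx = -\int_\Omega w^2\delta^2 u\,L(u)\,dx - \int_\Omega u\,\nabla u\cdot A\nabla(w^2\delta^2)\,dx.
\end{equation}

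The first term on the right of (\ref{3.4pf}) is straightforward: by Cauchy–Schwarz it is bounded by $\|wu\|_{L^2}\|w\delta^2 L(u)\|_{L^2}$, which after Young's inequality contributes $\varepsilon\|wu\|_{L^2}^2 + C_\varepsilon\|w\delta^2 L(u)\|_{L^2}^2$ — exactly the shape we want, with the understanding that one can also use $\|w\delta^2 L(u)\|\le C\|w\delta^2 L(u)\|$ trivially. The delicate term is the last one: $\nabla(w^2\delta^2) = 2w\delta^2\nabla w + 2w^2\delta\nabla\delta$. The second piece, $2w^2\delta\,u\,\nabla u\cdot A\nabla\delta$, is controlled using $|\nabla\delta|\le 1$ a.e. (true for a distance function): it is bounded by $2\Lambda\int_\Omega w\cdot w\delta|\nabla u|\cdot|u|\,dx \le 2\Lambda\|w\delta\nabla u\|_{L^2}\|wu\|_{L^2}$, again Young-absorbed into $\varepsilon\|w\delta\nabla u\|_{L^2}^2 + C_\varepsilon\|wu\|_{L^2}^2$. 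The first piece, $2w\delta^2\,u\,\nabla u\cdot A\nabla w$, is exactly where the hypothesis (\ref{3.2}), namely $|\delta\nabla w|\le K w$, is used: it bounds $|\delta^2\nabla u\cdot A\nabla w|\le \Lambda\delta|\nabla u|\cdot|\delta\nabla w|\le \Lambda K w\,\delta|\nabla u|$, so this term is $\le 2\Lambda K\int_\Omega (w\delta|\nabla u|)(w|u|)\,dx \le 2\Lambda K\|w\delta\nabla u\|_{L^2}\|wu\|_{L^2}$, once more Young-absorbable.

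Collecting these estimates in (\ref{3.4pf}), using ellipticity $\lambda\|w\delta\nabla u\|_{L^2}^2 \le \int_\Omega w^2\delta^2\nabla u\cdot A\nabla u\,dx$ on the left, and choosing $\varepsilon$ small enough (depending on $\lambda,\Lambda,K$) to absorb all the $\varepsilon\|w\delta\nabla u\|_{L^2}^2$ terms into the left-hand side, gives
\begin{equation}
\label{3.5pf}
\tfrac{\lambda}{2}\|w\delta\nabla u\|_{L^2(\Omega)}^2 \le C(\lambda,\Lambda,K,n)\left(\|wu\|_{L^2(\Omega)}^2 + \|w\delta^2 L(u)\|_{L^2(\Omega)}^2\right),
\end{equation}
which is (\ref{3.3}). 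The main obstacle I anticipate is the rigorous justification of the integration by parts in (\ref{3.4pf}) under the stated hypotheses: $u$ is only $H^1$, $w$ is only $L^2\cap H^1_{loc}$, and $\Gamma_1$ may be irregular, so one must argue by approximating $u$ by smooth functions and introducing a cutoff $\chi_\varepsilon$ vanishing in an $\varepsilon$-neighborhood of $\Gamma_1$, then showing the extra commutator terms from $\nabla\chi_\varepsilon$ vanish as $\varepsilon\to 0$ because they carry a factor $\delta^2\sim\varepsilon^2$ against $|\nabla\chi_\varepsilon|\sim\varepsilon^{-1}$ localized on a set of measure $O(\varepsilon)$. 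This is technical but standard; everything else is bookkeeping with Cauchy–Schwarz, Young, and the two pointwise bounds $|\nabla\delta|\le 1$ and $|\delta\nabla w|\le Kw$.
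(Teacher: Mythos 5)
Your proposal is correct and is essentially the paper's own argument: the paper likewise tests the equation with $w^2\delta^2 u$, splits the resulting identity into the ellipticity term $\int_\Omega w^2\delta^2\,\nabla u\cdot A\nabla u\,dx$ plus the two commutator terms coming from $\nabla(w^2\delta^2)=2w^2\delta\nabla\delta+2w\delta^2\nabla w$, and controls them with $|\nabla\delta|\le 1$, the hypothesis $|\delta\nabla w|\le Kw$, Cauchy--Schwarz and Young, absorbing the small terms after choosing $\epsilon$ of order $\lambda$. Your additional remarks on justifying the integration by parts (approximation plus a cutoff near $\Gamma_1$, using that $\delta$ vanishes there and $u$ vanishes on $\Gamma_2$) only make explicit a step the paper leaves implicit.
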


\begin{proof} Let us mention that in the proof the constant $C$ depends only on the quantities $\lambda, \Lambda, K$ and $n.$ Following [\ref{bib:Harutyunyan2}], we evaluate for any $u\in H^1(\Omega)$ using the boundary conditions on $u,$
\begin{equation}
\label{3.4}
I_3=\int_{\Omega}w^2\delta^2uL(u)dx=\int_{\Omega}w^2\delta^2u[\mathrm{div}(A\nabla u)]dx=I+I_1+I_2,
\end{equation}
where
\begin{equation}
\label{3.5}
I=-\int_{\Omega}w^2\delta^2\nabla u\cdot(A\nabla u)dx,
\end{equation}
\begin{equation}
\label{3.6}
I_1=-2\int_{\Omega}w^2\delta u\nabla\delta\cdot( A\nabla u)dx,\quad I_2=-2\int_{\Omega}w\delta^2 u \nabla w\cdot(A\nabla u)dx.
\end{equation}
Observe that as $\delta$ is a distance function, thus it is Lipschitz and weeakly differentiable a.e. with $|\nabla \delta|\leq 1.$ Consequently we have by the geometric-arithmetic mean inequality in the form $2ab\leq \epsilon a^2+\frac{1}{\epsilon}b^2,$ that
\begin{equation}
\label{3.7}
|I_1|\leq \epsilon\int_{\Omega}w^2\delta^2 |\nabla u|^2dx+\frac{C}{\epsilon}\int_{\Omega}w^2u^2dx,
\end{equation}
where $\epsilon>0$ is a number yet to be chosen. We have similarly that
$$
|I_2|\leq \epsilon\int_{\Omega}w^2\delta^2 |\nabla u|^2dx+\frac{C}{\epsilon}\int_{\Omega}\delta^2|\nabla w|^2u^2dx,
$$
thus owing to the bound (\ref{3.2}) we get
\begin{equation}
\label{3.8}
|I_2|\leq \epsilon\int_{\Omega}w^2\delta^2 |\nabla u|^2dx+\frac{CK^2}{\epsilon}\int_{\Omega}w^2u^2dx.
\end{equation}
Finally we have for $I_3$ by the Schwartz inequality, that
\begin{equation}
\label{3.9}
|I_3|\leq \frac{1}{2}\int_{\Omega}w^2u^2dx+\frac{1}{2}\int_{\Omega}w^2\delta^4L(u)^2dx.
\end{equation}
By the positive definiteness condition (\ref{2.1}) of $A$ we get the lower bound
\begin{equation}
\label{3.10}
I=\int_{\Omega}w^2\delta^2\nabla u\cdot(A\nabla u)dx\geq \lambda \int_{\Omega}w^2\delta^2|\nabla u|^2,
\end{equation}
thus combining the estimates (\ref{3.4})-(\ref{3.10}) and choosing $\epsilon=\frac{\lambda}{4},$ we obtain (\ref{3.3}).

\end{proof}

Next we give some useful examples of weights $w$ satisfying the hypothesis (\ref{3.2}). The statement is formulated in the below lemma.
\begin{lemma}
\label{lem:3.2}
Exponents of distance functions such as $w(x)=\left(\mathrm{dist}(x,x_0)\right)^\alpha$ and $w(x)=\left(\mathrm{dist}(x,\Gamma_1)\right)^\alpha$ fulfill the condition (\ref{3.2}) for any point $x_0\in \Gamma_1,$ with $K=|\alpha|.$ If functions $w_1(x),w_2(x),\dots,w_k(x)$ satisfy (\ref{3.2}) with the same constant $K$, then any linear combination $w(x)=\sum_{i=1}^kc_kw_k(x)$ with positive coefficients $c_i\geq 0$ satisfies (\ref{3.2}) with the same constant $K.$ Also, the product weight $w(x)=w_1(x)w_2(x)\cdot\ldots\cdot w_k(x)$ satisfies (\ref{3.2}) with a constant $kK.$
\end{lemma}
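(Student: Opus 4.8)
The plan is to prove Lemma~\ref{lem:3.2} by verifying the pointwise inequality $|\delta(x)\nabla w(x)|\leq K\cdot w(x)$ directly in each of the three cases, since all three parts are elementary consequences of the chain rule and the triangle inequality. The only genuine subtlety is that distance functions are merely Lipschitz, so one must be slightly careful that all differentiations are taken in the a.e./weak sense, but this is exactly the regularity already invoked in the proof of Lemma~\ref{lem:3.1} (where it is noted that $|\nabla\delta|\leq 1$ a.e.).

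First I would handle the building block $w(x)=\rho(x)^\alpha$ where $\rho(x)=\mathrm{dist}(x,x_0)$ or $\rho(x)=\mathrm{dist}(x,\Gamma_1)=\delta(x)$. Since $\rho$ is Lipschitz with $|\nabla\rho|\leq 1$ a.e., the chain rule gives $\nabla w=\alpha\rho^{\alpha-1}\nabla\rho$ a.e. on $\{\rho>0\}$, hence $|\delta\nabla w|\leq|\alpha|\,\delta\,\rho^{\alpha-1}$. When $x_0\in\Gamma_1$ we have $\delta(x)=\mathrm{dist}(x,\Gamma_1)\leq\mathrm{dist}(x,x_0)=\rho(x)$, so $\delta\rho^{\alpha-1}\leq\rho^{\alpha}=w$ (using $\alpha-1<0$ or $\alpha\geq 0$ as appropriate — in fact the bound $\delta\leq\rho$ combined with monotonicity of $t\mapsto t^{\alpha-1}$ handles all signs once one writes $\delta\rho^{\alpha-1}$ and compares with $\rho^\alpha$; the cleanest statement uses $\delta\le\rho$ directly). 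For $\rho=\delta$ itself the inequality $|\delta\nabla w|\le|\alpha|\delta^\alpha=|\alpha|w$ is immediate. In both cases $K=|\alpha|$, as claimed.

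Next, for the linear combination $w=\sum_{i=1}^k c_i w_i$ with $c_i\geq 0$ and each $w_i$ satisfying \eqref{3.2} with the common constant $K$: by linearity of the gradient and the triangle inequality, $|\delta\nabla w|\leq\sum_i c_i|\delta\nabla w_i|\leq\sum_i c_i K w_i=K\sum_i c_i w_i=Kw$, where nonnegativity of the $c_i$ and of the $w_i$ is used to pass the absolute value and to identify $\sum c_i w_i$ with $w=|w|$. For the product $w=w_1 w_2\cdots w_k$: the Leibniz rule gives $\nabla w=\sum_{i=1}^k\Big(\prod_{j\neq i}w_j\Big)\nabla w_i$, so $|\delta\nabla w|\leq\sum_i\Big(\prod_{j\neq i}w_j\Big)|\delta\nabla w_i|\leq\sum_i\Big(\prod_{j\neq i}w_j\Big)Kw_i=kKw$, again using $w_j\geq 0$ throughout. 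This gives the constant $kK$.

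The main obstacle — such as it is — is purely a matter of bookkeeping with the Lipschitz regularity: one should note that each $w_i$ of the form $\rho^\alpha$ lies in $H^1_{loc}(\Omega)$ only away from the zero set of $\rho$, i.e. on $\Omega\setminus\Gamma_1$ (and the hypotheses of Lemma~\ref{lem:3.1} are stated with $w\in L^2(\Omega)\cap H^1_{loc}(\Omega)$, which for $\alpha\in[0,1/2)$ and $\Omega$ bounded is satisfied since $\delta^{2\alpha}$ is integrable and $\delta$ does not vanish on compact subsets of the open set $\Omega$ away from $\Gamma_1$; the factor $\delta$ multiplying $\nabla w$ absorbs the singularity). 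I would state this observation once and then carry out the three computations above in order, with the product case last since it reuses the triangle-inequality idea from the linear-combination case.
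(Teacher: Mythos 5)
Your proof is correct and follows essentially the same route as the paper: the chain rule with $|\nabla\rho|\le 1$ a.e.\ for the power weights, the bound $\delta(x)\le\mathrm{dist}(x,x_0)$ when $x_0\in\Gamma_1$, and the triangle/Leibniz rules for the sum and product cases (which the paper simply declares trivial and skips). The only cosmetic remark is that your aside about the sign of $\alpha$ is unnecessary, since $\delta\rho^{\alpha-1}\le\rho^{\alpha}$ follows directly from $\delta\le\rho$ multiplied by the nonnegative factor $\rho^{\alpha-1}$, exactly as in the paper.
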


\begin{proof}
The proof is elementary. We have for the case $w(x)=\left(\mathrm{dist}(x,\Gamma_1)\right)^\alpha=\delta(x)^\alpha$ that $\nabla w(w)=\alpha\delta(x)^{\alpha-1}\nabla \delta(x),$ thus $|\delta(x)\nabla w(x)|\leq |\alpha|\delta(x)^{\alpha}.$ For the case $w(x)=\left(\mathrm{dist}(x,x_0)\right)^\alpha$ we have thanks to the inequality $\delta(x)\leq \mathrm{dist}(x,x_0),$ that
$$|\delta(x)\nabla w(x)|=|\alpha|\left(\mathrm{dist}(x,x_0)\right)^{\alpha-1}\delta(x)|\nabla\mathrm{dist}(x,x_0)|\leq |\alpha|\left(\mathrm{dist}(x,x_0)\right)^{\alpha}=|\alpha| w(x).$$
The proofs of the two remaining statements being trivial are skipped.
\end{proof}

\section{Proofs of the main resuts}
\label{sec:4}

\begin{proof}[proof of Theorem~\ref{thm:2.1}] We assume fist that $L$ and $\omega$ are general and the condition (\ref{2.2.1}) is satisfied. 
Due to the convenience of the reader we divide the proof is into some steps.\\
\textbf{Step 1.} \textit{It is sufficient to prove Theorem~\ref{thm:2.1} for a single summand $w=\delta^\alpha,$ where $\alpha\in[0,1/2).$}\\
 We aim to verify that if (\ref{2.3}) is valid for the weights $w_i$, $i=1,\dots,k,$ with the same constant $C,$ then it is valid for the sum $w=\sum_{i=1}^kw_i$ with the new constant $\overline{C}=kC.$ We have by the Cauchy-Schwartz inequality that
\begin{align}
\label{4.1}
\|w\nabla u\|_{L^2(\Omega)}^2&=\int_{\Omega}w^2|\nabla u|^2dx\\ \nonumber
&=\int_{\Omega}\left(\sum_{i=1}^kw_i\right)^2|\nabla u|^2dx\\ \nonumber
&\leq k\int_{\Omega}\sum_{i=1}^kw_i^2|\nabla u|^2dx\\ \nonumber
&=k\sum_{i=1}^k\|w_i\nabla u\|_{L^2(\Omega)}^2.
\end{align}
In the other hand we have the obvious inequality
\begin{align*}
\frac{\|wu\|_{L^2(\Omega)}\cdot\|wu_{x_1}\|_{L^2(\Omega)}}{h}&+\|wu_{x_1}\|_{L^2(\Omega)}^2\\
&\geq \frac{\|w_iu\|_{L^2(\Omega)}\cdot\|w_iu_{x_1}\|_{L^2(\Omega)}}{h}+\|w_iu_{x_1}\|_{L^2(\Omega)}^2,
\end{align*}
thus combining it with (\ref{4.1}), the validity of the statement follows. Thus without loss of generality, we will assume that in what follows in the proof, the weight $w$ is a single summand, i.e., $w=\delta^\alpha,$ where $0\leq\alpha<\beta.$ We also set $\gamma=2\alpha.$\\
\textbf{Step 2.} \textit{Assume the total number of $l$ dimensional subfaces of $\Gamma_1$ is $N_l$, for $l=0,1,\dots,d,$ and denote them by $F_1^l,F_2^l,\dots,F_{N_l}^l.$ Then there exists open connected disjoint subsets $\omega_s^l$ of $\omega$ for $l=0,1,\dots,d,$ and $s=1,\dots,N_l,$ with the following properties:
\begin{itemize}
\item[1.] The function $\delta(x)=\mathrm{dist}(x,\Gamma_1)$ satisfies $\delta(x)=\mathrm{dist}(x,F_s^l)$ if $x\in\omega_s^l$ for $l=0,1,\dots,d,$ and $s=1,\dots,N_l,$
\item[2.]
$$|\omega\setminus(\cup_{l=0}^{d}\cup_{s=1}^{N_l}\omega_s^l)|=0,$$
where $|\cdot|$ stands for the $n-1$ dimensional Lebesgue measure.
\item[3.] The estimate holds
\begin{equation}
\label{4.2}
\sum_{i,j=1}^n a_{ij}\frac{\partial^2\delta^\gamma}{\partial x_i\partial x_j}\leq n\Lambda\gamma\delta^{\gamma-2},
\end{equation}
for any $l=0,1,\dots,d,$ $s=1,\dots,N_l,$ and $x\in\omega_s^l.$
\end{itemize}
}
Observe that upon rotation and translation of the coordinate system, the distances function $\delta(x)=\mathrm{dist}(x,H_i)$ from an $i$ dimensional hyperplane $H_i$ in $R^n$ is given by
$$
\delta(x)=(x_1^2+x_2^2+\ldots+x_{n-i}^2)^{1/2}.
$$
Thus the equality of the distance of the points $x\in\mathbb R^n$ from two $s_1$ and $s_2$ dimensional hyperplanes gives a hypersurface of $n-$dimensional measure zero. Next, it is clear that the distance function $\delta$ from the simplex $\Gamma_1$ is locally the distance function from one of the subfaces of $F_i^j,$ i.e., the domain $\omega$ can be partitioned into open domains $\omega_s^l,$ $l=0,1,\dots,d$ and $i=1,2,\dots,N_l$ modulo zero measure, such that in each of the sets $\omega_s^l,$ the distance $\delta$ is the distance from the $F_s^l$ subface of the simplex $\Gamma_1.$ Assume now $l=0,1,\dots,d$ and $1\leq s\leq N_l$ are fixed. Given any point $x\in\omega_s^l,$ we have $\delta(x)=\mathrm{dist}(x,F_s^l).$ We aim next to prove a Hardy-like estimate for the weighted norms of $u$ and $\nabla u$ that holds in $\omega_s^l.$ To that end we can assume without loss of generality that $0\in F_s^l $ (upon a translation of the coordinates). Assume that $F_s^l$ is $n-m$ dimensional, then we can rotate the coordinate system to put $F_s^l$ in the hyperplane $\{x_1=x_2=\ldots =x_m=0\},$ thus we get
\begin{equation}
\label{4.3}
\delta(x)=\left(\sum_{k=1}^my_k^2\right)^{1/2},
\end{equation}
where $(y_1,y_2,\dots,y_n)=B(x_1,x_2,\dots,x_n)^T$ and $B\in SO(n)$ is a rotation.
We can calculate for any $1\leq i,j\leq n,$
$$\frac{\partial\delta^\gamma}{\partial x_i}=\gamma\delta^{\gamma-2}\sum_{k=1}^{m} b_{ki}y_k,$$
and
\begin{equation}
\label{4.4}
\frac{\partial^2\delta^\gamma}{\partial x_i\partial x_j}=\gamma(\gamma-2)\delta^{\gamma-4}\sum_{k=1}^m b_{ki}y_k\sum_{l=1}^m b_{lj}y_l+
\gamma\delta^{\gamma-2}\sum_{k=1}^mb_{ki}b_{kj}.
\end{equation}
Consequently
\begin{equation}
\label{4.5}
\sum_{i,j=1}^n a_{ij}\frac{\partial^2\delta^\gamma}{\partial x_i\partial x_j}
=\gamma(\gamma-2)\delta^{\gamma-4}\sum_{i,j=1}^n a_{ij}\sum_{k=1}^m b_{ki}y_k\sum_{l=1}^m b_{lj}y_l
+\gamma\delta^{\gamma-2}\sum_{i,j=1}^n a_{ij}\sum_{k=1}^mb_{ki}b_{kj}.
\end{equation}
Using the fact that the matrix $A$ is positive definite and $\gamma\in[0,1),$ we get that the first summand in (\ref{4.5}) is nonpositive, thus we obtain the estimate
\begin{equation}
\label{4.6}
\sum_{i,j=1}^n a_{ij}\frac{\partial^2\delta^\gamma}{\partial x_i\partial x_j}\leq \gamma\delta^{\gamma-2}\sum_{i,j=1}^n a_{ij}\sum_{k=1}^mb_{ki}b_{kj}
\end{equation}
The condition (\ref{2.1}) and the orthogonality of $B$ gives the bound
$$\sum_{i,j=1}^n a_{ij}b_{ki}b_{kj}\leq \Lambda\sum_{i=1}^nb_{ki}^2\leq \Lambda,\quad\text{for all}\quad 1\leq k\leq m,$$
thus we derive from (\ref{4.6}) the bound
\begin{equation}
\label{4.7}
\sum_{i,j=1}^n a_{ij}\frac{\partial^2\delta^\gamma}{\partial x_i\partial x_j}\leq m\Lambda\gamma\delta^{\gamma-2}
\leq n\Lambda\gamma\delta^{\gamma-2} ,
\end{equation}
which is (\ref{4.2}), and Step 2 is done.\\
\textbf{Step 3.} \textit{For any $0\leq l\leq d$ and $1\leq s\leq N_l,$ the Hardy-like estimate holds:}
\begin{equation}
\label{4.8}
\int_{\omega_s^l}\delta^{\gamma-2}u^2\leq \frac{4}{(\gamma-1)^2}\int_{\omega_s^l}\delta^\gamma|\nabla u|^2.
\end{equation}
Observe that if $a>0$, $f\colon[0,a]\to \mathbb R$ is absolutely continuous with $f(0)=f(a)=0,$ then we have integrating by parts,
\begin{align*}
\int_0^a t^{\gamma-2}f^2(t)dt&=\frac{1}{\gamma-1}\int_0^a f^2(t)dt^{\gamma-1}\\
&=\frac{2}{1-\gamma}\int_0^a t^{\gamma-1}f(t)f'(t)dt,
\end{align*}
thus we have by the Schwartz inequality,
$$\int_0^a t^{\gamma-2}f^2(t)dt\leq \frac{2}{1-\gamma}\left(\int_0^a t^{\gamma-2}f^2(t)dt\right)^{1/2}
\left(\int_0^a t^\gamma f'^2(t)dt\right)^{1/2},$$
consequently we get
\begin{equation}
\label{4.9}
\int_0^a t^{\gamma-2}f^2(t)dt\leq \frac{4}{(1-\gamma)^2}\int_0^a t^\gamma f'^2(t)dt.
\end{equation}
Next we fix a point $x\in F_s^l.$ It is clear that there exists a cone $V_x$ with an apex at $x$ such that for each point $y\in V_x,$ one has $\delta(y)=|y-x|.$ Moreover, $V_{x_1}\cap V_{x_2}=\emptyset$ if $x_1,x_2\in F_s^l,$ such that $x_1\neq x_2,$ and $\cup_{x\in F_s^l}V_x=\omega_s^l.$ Take now any pooint $y\in V_x$, then by the assumption, the ray $l_x$ starting at $x$ and going through $y$ meets the boundary of $\omega$ second time at $z\in\Gamma_2.$ It is clear that one has $z\in \partial V_x$ and $x\in\partial\omega_s^l$ as well. Denote $f(t)=u(x+t(z-x))\colon[0,1]\to\mathbb R,$
which is clearly absolutely continuous. An application of (\ref{4.9}) to $f$ gives
\begin{equation}
\label{4.10}
\int_0^1 t^{\gamma-2}u^2(x+t(z-x))dt\leq \frac{4}{(1-\gamma)^2}\int_0^1 t^\gamma |z-x|^2|\nabla u(x+t(z-x))|^2dt.
\end{equation}
Denoting $y_t=x+t(z-x)\in V_x$ for $t\in(0,1)$ and noting that $\delta(y_t)=|y_t-x|=t|z-x|=$ we obtain from $(\ref{4.10})$ the following segmental integral estimate
\begin{equation}
\label{4.11}
\int_x^z \delta^{\gamma-2}u^2(y_t)dt\leq \frac{4}{(1-\gamma)^2}\int_x^z \delta^\gamma |\nabla u(y_t)|^2dt.
\end{equation}
By integrating (\ref{4.11}) over all directions from $x$ to $z$ in $V_x$ and then integrating the obtained estimtes over all $x\in F_s^l,$ we arrive at (\ref{4.8}). Before starting the last step of the proof, first observe that by Step 2, the estimate (\ref{4.2}) holds a.e. in $\Omega$, and second, by summing up the bounds (\ref{4.8}) over the indeces $l=0,1,\dots, d$ and $s=1,2,\dots, N_l,$ we get the analogous estimate for $\omega:$
\begin{equation}
\label{4.12}
\int_{\omega}\delta^{\gamma-2}u^2\leq \frac{4}{(\gamma-1)^2}\int_{\omega}\delta^\gamma|\nabla u|^2.
\end{equation}
\textbf{Step 4.} \textit{In the last step we conclude the proof the Theorem~\ref{thm:2.1}}.
For any $t\in [0,h/2]$ denote $\Omega_t=(h/2-t,h/2+t)\times \omega$ and $\Omega_t'=(0,t)\times\omega.$ Recalling that $\gamma=2\alpha,$ we have by integration by parts and using the condition (\ref{2.1}), that
\begin{align}
\label{4.13}
\lambda\int_{\Omega_t} & \delta^\gamma|\nabla u|^2dx\leq \int_{\Omega_t}\delta^\gamma\sum_{i,j=1}^na_{ij}u_{x_i}u_{x_j}dx\\ \nonumber
&=-\int_{\Omega_t}\delta^\gamma u\sum_{i=1}^n\sum_{j=2}^na_{ij}u_{x_ix_j}dx-
\int_{\Omega_t}u\sum_{i=1}^n\sum_{j=2}^na_{ij}u_{x_i}\frac{\partial }{\partial x_j}(\delta^\gamma)dx
-\int_{\Omega_t}\delta^\gamma u\sum_{j=2}^na_{1j}u_{x_1x_j}dx\\ \nonumber
&-\int_{\Omega_t}u\sum_{j=2}^na_{1j}u_{x_1}\frac{\partial }{\partial x_j}(\delta^\gamma)dx
-\int_{\Omega_t}a_{11}u\delta^\gamma u_{x_1x_1}dx
+a_{11}\int_{\omega} \left[\left(u\delta^\gamma u_{x_1}\right)|_{x_1=h/2-t}^{x_1=h/2+t}\right]dx'\\ \nonumber
&=-\int_{\Omega_t}u\delta^\gamma L(u)
-2a_{12}\int_{\Omega_t}\delta^{\gamma-1}uu_{x_1}dx
-\int_{\Omega_t}u\sum_{i,j=2}^na_{ij}u_{x_i}\frac{\partial }{\partial x_j}(\delta^\gamma)dx \\ \nonumber
&+a_{11}\int_{\omega} \left[\left(u\delta^\gamma u_{x_1}\right)|_{x_1=h/2-t}^{x_1=h/2+t}\right]dx'\\ \nonumber
&=-2a_{12}\gamma\int_{\Omega_t}\delta^{\gamma-1}uu_{x_1}dx
-\int_{\Omega_t}u\sum_{i,j=2}^na_{ij}u_{x_i}\frac{\partial }{\partial x_j}(\delta^\gamma)dx
+a_{11}\int_{\omega} \left[\left(u\delta^\gamma u_{x_1}\right)|_{x_1=h/2-t}^{x_1=h/2+t}\right]dx'.
\end{align}
For the second summand we have integrating by parts and using the estmates (\ref{4.2}) and (\ref{4.12}), that
\begin{align}
\label{4.14}
\int_{\Omega_t}u\sum_{i,j=2}^na_{ij}u_{x_i}\frac{\partial }{\partial x_j}(\delta^\gamma)dx
&=-\frac{1}{2}\int_{\Omega_t}u^2\sum_{i,j=2}^na_{ij}\frac{\partial^2(\delta^\gamma) }{\partial x_i\partial x_j}dx\\ \nonumber
&\geq -\frac{n\Lambda\gamma}{2}\int_{\Omega_t}\delta^{\gamma-2}u^2dx\\ \nonumber
&\geq -\frac{2n\Lambda\gamma}{(1-\gamma)^2}\int_{\Omega_t}\delta^{\gamma}|\nabla u|^2dx.
\end{align}
For the first summand we have by the Schwartz inequality and by (\ref{4.12}),
\begin{align}
\label{4.15}
\left|2a_{12}\gamma\int_{\Omega_t}\delta^{\gamma-1}uu_{x_1}dx\right|&\leq
2|a_{12}|\gamma\left(\int_{\Omega_t}\delta^{\gamma-2}u^2dx\right)^{1/2}\left(\int_{\Omega_t}\delta^{\gamma}|u_{x_1}|^2dx\right)^{1/2}\\ \nonumber
&\leq \frac{4|a_{12}|\gamma}{1-\gamma}\left(\int_{\Omega_t}\delta^{\gamma}|\nabla u|^2dx\right)^{1/2}\left(\int_{\Omega_t}\delta^{\gamma}|u_{x_1}|^2dx\right)^{1/2}.
\end{align}
 Combining (\ref{4.13}), (\ref{4.14}) and (\ref{4.15}) we establish the bound
\begin{align}
\label{4.16}
\left(\lambda-\frac{2n\Lambda\gamma}{(1-\gamma)^2}\right)\int_{\Omega_t}\delta^{\gamma}|\nabla u|^2dx&\leq
\Lambda\int_{\Omega_t} \left[\left(u\delta^\gamma u_{x_1}\right)|_{x_2=h/2-t}^{x_2=h/2+t}\right]dx'\\ \nonumber
&+\frac{4\Lambda\gamma}{1-\gamma}\left(\int_{\Omega_t}\delta^{\gamma}|\nabla u|^2dx\right)^{1/2}\left(\int_{\Omega_t}\delta^{\gamma}|u_{x_1}|^2dx\right)^{1/2}
\end{align}
Next we integrate (\ref{4.11}) in $t$ over the interval $(0,h/2)$ and utilize the Schwartz and the Cauchy-Schwartz inequalities to get under the condition (\ref{2.2.1}) the bound
\begin{equation}
\label{4.17}
\int_0^{h/2}dt\int_{\Omega_t}\delta^{\gamma}|\nabla u|^2dx\leq C\left(\int_{\Omega}\delta^\gamma u^2dx\right)^{1/2}
\left(\int_{\Omega}\delta^\gamma |u_{x_1}|^2dx\right)^{1/2}+Ch\int_{\Omega}\delta^\gamma |u_{x_1}|^2dx.
\end{equation}
Observe, that the function $\int_{\Omega_t}\delta^{\gamma}|\nabla u|^2dx$ increases in $t,$ thus we get from (\ref{4.17}) the estimate
\begin{equation}
\label{4.18}
\int_{\Omega_{h/4}}\delta^{\gamma}|\nabla u|^2dx\leq \frac{C}{h}\left(\int_{\Omega}\delta^\gamma u^2dx\right)^{1/2}
\left(\int_{\Omega}\delta^\gamma |u_{x_1}|^2dx\right)^{1/2}+C\int_{\Omega}\delta^\gamma |u_{x_1}|^2dx.
\end{equation}
Having (\ref{4.18}) in hand, it remains to estimate the quantity $\int_{\Omega_{h/4}'}\delta^\gamma|\nabla u|^2dx.$ Next we recall the following Hardy-like estimate established by Kondratiev and Oleinik [\ref{bib:Kon.Ole.1},\ref{bib:Kon.Ole.2},\ref{bib:Ole.Sha.Yos.}].
\begin{lemma}
\label{lem:4.1}
Assume $a>0$ and $f\colon[0,a]\to\mathbb R$ is absolutely countinuous. Then there holds:
$$\int_0^{a/2}f^2(t)dt\leq 4\int_{a/2}^af^2(t)dt+4\int_0^a t^2|f'(t)|^2dt.$$
\end{lemma}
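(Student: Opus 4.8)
The plan is to obtain the inequality from a single integration by parts against a suitably chosen weight, followed by Cauchy--Schwarz and Young's inequality; no averaging argument or pointwise Hardy estimate is needed. \textbf{Step 1 (test weight and an exact identity).} First I would introduce the piecewise linear function $w\colon[0,a]\to\mathbb R$ given by $w(t)=-t$ on $[0,a/2]$ and $w(t)=t-a$ on $[a/2,a]$. It is Lipschitz, continuous at $a/2$ (both pieces equal $-a/2$ there), vanishes at the two endpoints, has $w'=-1$ on $(0,a/2)$ and $w'=+1$ on $(a/2,a)$, and satisfies the decisive pointwise bound $|w(t)|=\min(t,a-t)\le t$ on $[0,a]$. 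Since $f$ is absolutely continuous on the compact interval $[0,a]$ it is bounded there, so $f^2$ is absolutely continuous with $(f^2)'=2ff'$ a.e.; integrating by parts, and using $w(0)=w(a)=0$ to kill the boundary terms, one gets
\[
\int_0^{a/2}f^2\,dt-\int_{a/2}^{a}f^2\,dt \;=\; -\int_0^a w'(t)f^2(t)\,dt \;=\; 2\int_0^a w(t)f(t)f'(t)\,dt .
\]
(Equivalently, integrate by parts separately on $[0,a/2]$ and on $[a/2,a]$ and note that the two $\tfrac{a}{2}f(a/2)^2$ contributions cancel — this cancellation is the only small thing to check.)

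\textbf{Step 2 (estimate and absorb).} Using $|w(t)|\le t$ and then Cauchy--Schwarz,
\[
\Big|2\int_0^a w f f'\,dt\Big|\;\le\;2\int_0^a t\,|f|\,|f'|\,dt\;\le\;2\Big(\int_0^a f^2\,dt\Big)^{1/2}\Big(\int_0^a t^2 f'^2\,dt\Big)^{1/2}.
\]
Writing $X=\int_0^{a/2}f^2$, $Y=\int_{a/2}^a f^2$, $Z=\int_0^a t^2 f'^2$, so that $\int_0^a f^2=X+Y$, the identity of Step 1 combined with this bound gives $X\le Y+2\sqrt{X+Y}\,\sqrt Z$. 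Now Young's inequality $2\sqrt{X+Y}\,\sqrt Z\le\tfrac12(X+Y)+2Z$ (valid since $\tfrac12 a^2-2ab+2b^2=\tfrac12(a-2b)^2\ge0$) together with moving $\tfrac12 X$ to the left yield $\tfrac12 X\le\tfrac32 Y+2Z$, i.e. $X\le 3Y+4Z$. This is slightly stronger than the asserted $X\le 4Y+4Z$, which therefore follows.

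I do not anticipate a genuine obstacle; the only creative input is the choice of $w$, and it is essentially forced by three requirements: $w$ must vanish at $0$ and $a$ (so that integration by parts leaves no boundary term), $w'$ must equal $\mathbf 1_{(a/2,a)}-\mathbf 1_{(0,a/2)}$ (so that the interior term is exactly the difference of the two $L^2$ quantities to be compared), and $|w|$ must be bounded by $t$ (so that the cross term is controlled by $\int_0^a t^2 f'^2$); the tent-shaped function above is the obvious choice meeting all three. All remaining points — finiteness of the integrals, legitimacy of the integration by parts, and the real-valuedness of $f$ — are automatic once $f$ is absolutely continuous on $[0,a]$.
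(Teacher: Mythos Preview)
Your proof is correct. The choice of the tent weight $w$ is exactly right, the integration by parts is legitimate because both $w$ and $f^2$ are absolutely continuous on $[0,a]$ with $w$ vanishing at the endpoints, and your absorption via Young's inequality is clean; you even obtain the slightly sharper bound $X\le 3Y+4Z$.

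As for comparison with the paper: the paper does not supply its own proof of this lemma. It merely \emph{recalls} the estimate as a known Hardy-like inequality due to Kondratiev and Oleinik and cites the references [\ref{bib:Kon.Ole.1},\ref{bib:Kon.Ole.2},\ref{bib:Ole.Sha.Yos.}]. Your argument therefore fills in a gap the paper leaves to the literature, and it does so with a short, self-contained computation that requires nothing beyond elementary one-variable calculus. The only remark worth adding is that the same integration-by-parts identity, with the tent weight reflected, also gives the mirror estimate $\int_{a/2}^a f^2\le 4\int_0^{a/2}f^2+4\int_0^a (a-t)^2|f'|^2\,dt$, which is the companion inequality one would need when working near the right endpoint rather than the left; the paper implicitly uses both versions when it invokes the lemma on the two halves of $(0,h)$.
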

We fix any index $2\leq k\leq n,$ any point $x'\in\omega$ and apply Lemma~\ref{4.1} to the function $f(t)=\delta^{\alpha} u_{x_k}(t,x')$ over the interval with the endpoints $(0,x')$ and $(h/2,x').$ We have that
$$\int_0^{h/4}\delta^\gamma |u_{x_k}(t,x')|^2dt\leq 4\int_{h/4}^{h/2}\delta^\gamma |u_{x_k}(t,x')|^2dt
+4\int_0^{h/2}\delta^\gamma t^2 |u_{x_kx_1}(t,x')|^2dt.$$
Summing the above inequalities over $2\leq k\leq n,$ and addind the missing summand on the left hand side we arrive at
$$\int_0^{h/4}\delta^\gamma |\nabla u(t,x')|^2dt\leq 4\int_{h/4}^{h/2}\delta^\gamma |\nabla u(t,x')|^2dt+\int_0^{h/4}\delta^\gamma |u_{x_1}(t,x')|^2dt
+4\int_0^{h/2}\delta^\gamma t^2|\nabla u_{x_1}(t,x')|^2dt.$$
Upon integrating the last estimate in $x'$ over $\omega$ we obtain
\begin{equation}
\label{4.19}
\int_{\Omega_{h/4}'}\delta^\gamma |\nabla u|^2\leq 4\int_{\Omega_{h/4}}\delta^\gamma |\nabla u|^2+
\int_{\Omega_{h/4}'}\delta^\gamma |u_{x_1}|^2+4\int_{\Omega_{h/2}'}\delta^\gamma x_1^2|\nabla u_{x_1}|^2.
\end{equation}
From the boundary conditions $u=0$ on $[0,h]\times\partial \omega$ we have that $u_{x_1}=0$ on $[0,h]\times\partial \omega$, thus we can apply Lemma~\ref{lem:3.1} to the function $u_{x_1}$ with the weight $w=\delta^\alpha$ in $\Omega$ to get
$$\int_{\Omega}\delta^\gamma |\nabla u_{x_1}|^2\leq C\int_{\Omega}\delta^\gamma |u_{x_1}|^2+
C\int_{\Omega}\delta^{2\gamma} |\delta^2L(u_{x_1})|^2.$$
Observe that by differentiating the equality $L(u)=0$ we get $L(u_{x_1})=0$ in $\Omega,$ thus the last estimate simplifies to
\begin{equation}
\label{4.20}
\int_{\Omega}\delta^\gamma |\delta \nabla u_{x_1}|^2\leq C\int_{\Omega}\delta^\gamma |u_{x_1}|^2.
\end{equation}
Note that we have on the other hand
\begin{equation}
\label{4.21}
\int_{\Omega_{h/2}'}\delta^\gamma x_1^2|\nabla u_{x_1}|^2\leq \int_{\Omega}\delta^\gamma |\delta \nabla u_{x_1}|^2,
\end{equation}
thus combining the estimates (\ref{4.19})-(\ref{4.21}) we discover
\begin{equation}
\label{4.22}
\int_{\Omega_{h/4}'}\delta^\gamma |\nabla u|^2\leq 4\int_{\Omega_{h/4}}\delta^\gamma |\nabla u|^2+
C\int_{\Omega}\delta^\gamma |u_{x_1}|^2.
\end{equation}
It is clear that a similar estimate holds also for the slice of $\Omega$ that is obtained from $\Omega_{h/4}'$ by mirror symmetry about the plane $x_1=h/2.$ Thus putting togehter (\ref{4.18}) and (\ref{4.22}) we establish the estimate
\begin{equation}
\label{4.23}
\int_{\Omega}\delta^\gamma |\nabla u|^2\leq \frac{C}{h}\left(\int_{\Omega}\delta^\gamma u^2dx\right)^{1/2}
\left(\int_{\Omega}\delta^\gamma |u_{x_1}|^2dx\right)^{1/2}+C\int_{\Omega}\delta^\gamma |u_{x_1}|^2,
\end{equation}
which yields (\ref{2.3}). The proof in the case of the peresence of the cndition (\ref{2.2.1}) is complete.\\ 
Assume now $n=2$ and $L=\Delta.$ It is straightforward to check that in the case $0\leq \alpha\leq 1/2$ there is no need to prove the estimates like (\ref{4.7}), (\ref{4.8}) and (\ref{4.12}) (also they make no sense for $\alpha=1/2$) as they are basically needed to estimate the firs and second summands in the last line of (\ref{4.13}). Instead, if $L=\Delta$ then apparently the first summand in (\ref{4.13}) vanishes and if $n=2,$ then we have for the second summand in (\ref{4.13}) integrating by parts, that (as then $\delta=x_2$)
$$
-\int_{\Omega_t}u\sum_{i,j=2}^na_{ij}u_{x_i}\frac{\partial }{\partial x_j}(\delta^\gamma)dx
=\frac{\gamma(\gamma-1)}{2}\int_{\Omega_t}u^2x_2^{\gamma-2}dx\leq 0,
$$
which is to replace the estimate (\ref{4.14}).
\end{proof}

\begin{proof}[Proof of Theorem~\ref{thm:2.2}]
We adopt the main strategy of proving Korn or a related inequality. Namely, we firs assume without loss of generality that $\BU$ is smooth up to the boundary of $R$ and the consider the harmonic part $\tilde u$ of $u,$ i.e., assume $\tilde u\in H^1(R)$ is the unique solution of the Dirichlet boundary value problem
\begin{equation}
\label{4.24}
\begin{cases}
\Delta\tilde u=0 & \ \text{in} \ R,\\
\tilde u=u & \ \text{on} \ \partial R.
\end{cases}
\end{equation}
We can calculate
$$\Delta(u-\tilde u)=\Delta u=(e_{11}(\BU)-e_{11}(\BU))_x+(2e_{12}(\BU))_y,$$
thus we can evaluate
\begin{align}
\label{4.25}
\int_{R}y^\gamma|\nabla(u-\tilde u)|^2&=-\int_{R}\left[(u-\tilde u)(y^\gamma(u-\tilde u)_{x})_{x}
+(u-\tilde u)(y^\gamma(u-\tilde u)_{y})_{y}\right]\\ \nonumber
&=-\int_{R}(u-\tilde u)[y^\gamma\Delta(u-\tilde u)+\gamma y^{\gamma-1}(u-\tilde u)_{y}]\\ \nonumber
&=I_1+I_2,
\end{align}
where
\begin{align}
\label{4.26}
I_1&=-\int_{R}y^\gamma(u-\tilde u)[(e_{11}(\BU)-e_{22}(\BU))_{x}+2(e_{12}(\BU))_{y}]\\ \nonumber
I_2&=-\int_{R}\gamma y^{\gamma-1}(u-\tilde u)(u-\tilde u)_{y}.
\end{align}
Due to the fact that $u-\tilde u$ vanishes on $\partial R,$ we can calculate by integration by parts that
\begin{equation}
\label{4.27}
I_2=\frac{\gamma(\gamma-1)}{2}\int_R y^{\gamma-2}(u-\tilde u)^2\leq 0.
\end{equation}
For the first summand we have again integrating by parts
\begin{equation}
\label{4.28}
I_1=\int_{R}y^\gamma(u-\tilde u)_x(e_{11}(\BU)-e_{22}(\BU))
+2\int_{R}y^\gamma(u-\tilde u)_ye_{12}(\BU)
+2\gamma \int_{R}y^{\gamma-1}(u-\tilde u)e_{12}(\BU).
\end{equation}
Consequently we obtain by the Schwartz inequality
\begin{equation}
\label{4.29}
I_1\leq 4\|y^\alpha\nabla(u-\tilde u)\|\cdot\|y^\alpha e(\BU)\|+2\gamma\|y^{\alpha-1}(u-\tilde u)\|\cdot \|y^\alpha e(\BU)\|.
\end{equation}
Note, that we derived (\ref{4.12}) under the sole condition on the function on $u$ that it vanishes on $\partial\omega,$ thus (\ref{4.12}) holds for the function $u-\tilde u$ in the domain $R$ with the weight $w=y^\alpha,$ i.e., we have the estimate
$$\|y^{\alpha-1}(u-\tilde u)\|\leq \frac{2}{1-\gamma}\|y^\alpha\nabla(u-\tilde u)\|,$$
which together with (\ref{4.29}) gives the bound
\begin{equation}
\label{4.30}
I_1\leq \frac{4}{1-\gamma}\|y^\alpha\nabla(u-\tilde u)\|\cdot \|y^\alpha e(\BU)\|.
\end{equation}
Combining (\ref{4.25}), (\ref{4.27}) and (\ref{4.30}) we arrive at the bound
\begin{equation}
\label{4.31}
\|y^\alpha\nabla(u-\tilde u)\|\leq \frac{4}{1-\gamma}\|y^\alpha e(\BU)\|.
\end{equation}
Also, we have by the Poincar\'e inequality (not with the best constant) in the $x$ direction that
\begin{equation}
\label{4.32}
\|y^\alpha(u-\tilde u)\|\leq h \|y^\alpha(u-\tilde u)_x\|\leq \|y^\alpha\nabla(u-\tilde u)\|\leq \frac{4h}{1-\gamma}\|y^\alpha e(\BU)\|.
\end{equation}
We apply Theorem~\ref{thm:2.1} to the function $\tilde u$ in the domain $\Omega=R.$ By virtue of the triangle inequality and the estimates (\ref{4.31}) and (\ref{4.32}) we can develop the following chain of estimates:
\begin{align}
\label{4.33}
\|y^\alpha u_y\|^2&\leq 2\|y^\alpha (u-\tilde u)_y\|^2+2\|y^\alpha\tilde u_y\|^2\\ \nonumber
&\leq \frac{8}{1-\gamma}\|y^\alpha e(\BU)\|^2+\frac{C}{h}\|y^\alpha\tilde u_x\|\cdot \|y^\alpha\tilde u\|+C\|y^\alpha\tilde u_x\|^2\\ \nonumber
&\leq C\|y^\alpha e(\BU)\|^2+\frac{C}{h}(\|y^\alpha u_x\|+\|y^\alpha \nabla (u-\tilde u)\|)(\|y^\alpha u\|+\|y^\alpha (u-\tilde u)\|\\ \nonumber
&+C\|y^\alpha u_x\|^2+C\|y^\alpha \nabla (u-\tilde u)\|^2\\ \nonumber
&\leq \frac{C}{h}\|e(\BU)\|\cdot\|y^\alpha u\|+C\|e(\BU)\|^2.
\end{align}
The norm $\|y^\alpha v_x\|$ can then be estimated in terms of $\|y^\alpha u_y\|$ as above and $e_{12}(\BU)$ by the triangle inequality. The proof of the theorem is complete.

\end{proof}

\begin{proof}[Proof of Theorem~\ref{thm:2.3}]. 
The Ansatz realizing the asymptotics of $h$ for both inequalities (\ref{3.2}) and (\ref{3.3}) comes from the papers [\ref{bib:Gra.Har.1},\ref{bib:Harutyunyan1}]. In the case when $L=\Delta$ and $\omega=(a,b)\times\tilde\omega$ we can use the Ansatz 
$$u(x)=\cosh\left(\frac{\pi}{b-a}\left(x_1-\frac{h}{2}\right)\right)\sin\left(\frac{\pi x_2}{b-a}\right),$$
for the estimate (\ref{3.2}). For (\ref{3.3}) we use the Ansatz 
$$U=\left(f\left(\frac{y}{h^\alpha}\right),-\frac{x}{h^\alpha}f\left(\frac{y}{h^\alpha}\right)\right),$$
where $f$ is a smooth function supported on $(a,b)$ and $\alpha\in[0,1/2].$ 
The calculation for both cases is straightforward and is omitted here.  
\end{proof}

\end{document}